\theoremstyle{plain}
\newtheorem{thm}{Theorem}
\theoremstyle{remark}
\newtheorem{rem}{Remark}
\DeclareMathOperator{\td}{d\mspace{-2mu}}
\date{Completed on Wednesday 23 July 2008 in Melbourne}
\date{}
\begin{document}

\title[Conditions for a function to be completely monotonic]
{Necessary and sufficient conditions for a function involving divided differences of the di- and tri-gamma functions to be completely monotonic}

\author[F. Qi]{Feng Qi}
\address[F. Qi]{Research Institute of Mathematical Inequality Theory, Henan Polytechnic University, Jiaozuo City, Henan Province, 454010, China}
\email{\href{mailto: F. Qi <qifeng618@gmail.com>}{qifeng618@gmail.com}, \href{mailto: F. Qi <qifeng618@hotmail.com>}{qifeng618@hotmail.com}, \href{mailto: F. Qi <qifeng618@qq.com>}{qifeng618@qq.com}}
\urladdr{\url{http://qifeng618.spaces.live.com}}

\author[B.-N. Guo]{Bai-Ni Guo}
\address[B.-N. Guo]{School of Mathematics and Informatics, Henan Polytechnic University, Jiaozuo City, Henan Province, 454010, China}
\email{\href{mailto: B.-N. Guo <bai.ni.guo@gmail.com>}{bai.ni.guo@gmail.com}, \href{mailto: B.-N. Guo <bai.ni.guo@hotmail.com>}{bai.ni.guo@hotmail.com}}
\urladdr{\url{http://guobaini.spaces.live.com}}

\begin{abstract}
In the present paper, necessary and sufficient conditions are established for a function involving divided differences of the digamma and trigamma functions to be completely monotonic. Consequently, necessary and sufficient conditions are derived for a function involving the ratio of two gamma functions to be logarithmically completely monotonic, and some double inequalities are deduced for bounding divided differences of polygamma functions.
\end{abstract}

\keywords{Necessary and sufficient condition, complete monotonicity, logarithmically complete monotonicity, divided difference, trigamma function, tetragamma function, bound, polygamma function, ratio of two gamma functions}

\subjclass[2000]{Primary 26A48, 33B15; Secondary 26A51, 26D10, 65R10}

\thanks{The first author was partially supported by the China Scholarship Council}

\thanks{This paper was typeset using \AmS-\LaTeX}

\maketitle

\section{Introduction}
Recall \cite[Chapter~XIII]{mpf-1993} and \cite[Chapter~IV]{widder} that a function $f$ is said to be completely monotonic (CM) on an interval $I$ if $f$ has derivatives of all orders on $I$ and
\begin{equation}\label{CM-dfn}
(-1)^{n}f^{(n)}(x)\ge0
\end{equation}
for $x\in I$ and $n\ge0$. The well-known Bernstein-Widder's Theorem \cite[p.~160, Theorem~12a]{widder} states that a function $f(x)$ on $[0,\infty)$ is CM if and only if there exists a bounded and non-decreasing function $\alpha(t)$ such that
\begin{equation} \label{berstein-1}
f(x)=\int_0^\infty e^{-xt}\td\alpha(t)
\end{equation}
converges for $x\in[0,\infty)$. This expresses that a CM function $f$ on $[0,\infty)$ is a Laplace transform of the measure $\alpha$.
\par
Recall also~\cite{Atanassov, minus-one} that a function $f$ is said to be logarithmically completely monotonic (LCM) on an interval $I\subseteq\mathbb{R}$ if it has derivatives of all orders on $I$ and its logarithm $\ln f$ satisfies
\begin{equation}\label{lcm-dfn}
(-1)^k[\ln f(x)]^{(k)}\ge0
\end{equation}
for $k\in\mathbb{N}$ on $I$.
The terminology ``logarithmically completely monotonic function'' was first put forward in~\cite{Atanassov} without an explicit definition, but it seems to have been ignored until recently by the mathematical community. In early 2004, this notion was recovered in~\cite{minus-one, auscm-rgmia}. Since the class of LCM functions is a subclass of the CM functions, this definition is significant and meaningful. For more information on basic properties of LCM functions, please refer to \cite{CBerg, grin-ismail, e-gam-rat-comp-mon} and related references therein.
\par
It is well-known that the classical Euler's gamma function
\begin{equation}\label{gamma-dfn}
\Gamma(x)=\int^\infty_0t^{x-1} e^{-t}\td t
\end{equation}
for $x>0$, the psi function $\psi(x)=\frac{\Gamma'(x)}{\Gamma(x)}$ and the polygamma functions $\psi^{(i)}(x)$ for $i\in\mathbb{N}$ are a series of important special functions and have much extensive applications in many branches such as statistics, probability, number theory, theory of $0$-$1$ matrices, graph theory, combinatorics, physics, engineering, and other mathematical sciences. In particular, the functions $\psi(x)$ and $\psi'(x)$ for $x>0$ are also called the digamma and trigamma functions respectively, see \cite{abram} and \cite[p.~71]{dict-bullen}.
\par
By using the double inequalities
\begin{equation}
\frac1x+\frac1{2x^2}+\frac1{6x^3}-\frac1{30x^5}<\psi'(x)<\frac1x+\frac1{2x^2}+\frac1{6x^3},
\end{equation}
see \cite[p.~860, Theorem~4]{gordon}, and
\begin{equation}
-\frac1{x^2}-\frac1{x^3}-\frac1{2x^4}<\psi''(x)<-\frac1{x^2}-\frac1{x^3},
\end{equation}
a special cases of \cite[Theorem~9]{alzer-mc-97}, for $x>0$, in order to show that the double inequality
\begin{equation}
(n-1)!\exp\biggl[\frac{\alpha}x-n\psi(x)\biggr]<\bigl\vert\psi^{(n)}(x)\bigr\vert  <(n-1)!\exp\biggl[\frac{\beta}x-n\psi(x)\biggr]
\end{equation}
holds for $x>0$ if and only if $\alpha\le-n$ and $\beta\ge0$, it was established in the proof of  \cite[Theorem~4.8]{forum-alzer} that
\begin{equation}\label{psi'psi''}
[\psi'(x)]^2+\psi''(x)>\frac{p(x)}{900x^4(x+1)^{10}}
\end{equation}
for $x>0$, where
\begin{equation}
\begin{split}
p(x)&=75x^{10}+900x^9+4840x^8+15370x^7+31865x^6+45050x^5\\
&\quad+44101x^4+29700x^3+13290x^2+3600x+450.
\end{split}
\end{equation}
\par
From \eqref{psi'psi''}, the inequality
\begin{equation}\label{positivity}
[\psi'(x)]^2+\psi''(x)>0
\end{equation}
for $x>0$ was deduced and used to present a double inequality
\begin{equation}\label{batir-ineq-interesting}
\exp\bigl\{\alpha\bigl[e^{\psi(x)}\psi(x)-e^{\psi(x)}+1\bigr]\bigr\}\le \frac{\Gamma(x)}{\Gamma(c)} \le\exp\bigl\{\beta\bigl[e^{\psi(x)}\psi(x)-e^{\psi(x)}+1\bigr]\bigr\}
\end{equation}
in the proof of~\cite[Theorem~2.1]{batir-interest-jipam} and~\cite[Theorem~2.1]{batir-interest-rgmia}, with $\alpha=1$ and $\beta=\frac{6e^\gamma}{\pi^2}$ for $x>c$, where $c=1.4616\dotsm$ is the only positive zero of the psi function $\psi(x)$ on $(0,\infty)$.
\par
In~\cite[Theorem~2.1]{batir-new-jipam} and~\cite[Theorem~2.1]{batir-new-rgmia}, in order to prove that the inequality
\begin{equation}\label{batir-one-side-ineq}
\psi(x)>\ln\frac{\pi^2}6-\gamma-\ln\bigl(e^{1/x}-1\bigr)
\end{equation}
holds for $x\ge2$, the inequality~\eqref{positivity} was recovered in~\cite[Lemma~1.1]{batir-new-jipam} and~\cite[Lemma~1.1]{batir-new-rgmia} elegantly.
\par
In \cite{property-psi-ii.tex}, the inequality~\eqref{positivity} was used to give a simple proof for the increasing property of the function
\begin{equation}\label{phi(x)-dfn}
\phi(x)=\psi(x)+\ln\bigl(e^{1/x}-1\bigr)
\end{equation}
on $(0,\infty)$.
\par
In \cite[Remark~1.3]{batir-jmaa-06-05-065}, it was pointed out that the inequality~\eqref{positivity} is a special case of the inequality
\begin{equation}
(-1)^n\psi^{(n+1)}(x)<\frac{n}{\sqrt[n]{(n-1)!}\,}\bigl[(-1)^{n-1}\psi^{(n)}(x)\bigr]^{1+1/n}
\end{equation}
for $x>0$ and $n\in\mathbb{N}$.
\par
In \cite[Theorem~4.3]{alzer-grinshpan}, the inequality~\eqref{positivity} was applied to provide a sharp and generalized version of \eqref{batir-ineq-interesting}:\label{intr-sec} For $0<a<b\le\infty$ and $x\in(a,b)$, the inequality~\eqref{batir-ineq-interesting} is valid with the best possible constant factors
\begin{equation}
\alpha=\begin{cases}
Q(b), &\text{if $b<\infty$}\\ 1,&\text{if $b=\infty$}
\end{cases}
\quad\text{and}\quad \beta=Q(a),
\end{equation}
where
\begin{equation}
Q(x)=\begin{cases}
\dfrac{\ln\Gamma(x)-\ln\Gamma(c)}{[\psi(x)-1]e^{\psi(x)}+1},&x\ne c;\\[1em]
\dfrac1{\psi'(c)},& x=c.
\end{cases}
\end{equation}
\par
In \cite[Lemma~4.6]{alzer-grinshpan} and \cite[Theorem~4.8]{alzer-grinshpan}, the inequalities \eqref{positivity} and~\eqref{batir-ineq-interesting} were respectively generalized to $q$-analogues.
\par
In \cite[Theorem~2]{Infinite-family-Digamma.tex}, the inequality~\eqref{positivity} was used to show that the function $e^{\psi(x+1)}-x$ is strictly decreasing and strictly convex on $(-1,\infty)$.
\par
In \cite{AAM-Qi-09-PolyGamma.tex}, among other things, it was proved that the function
\begin{equation}\label{di-tetra-gamma-lambda}
\Delta_{\lambda}(x)=[\psi'(x)]^2+\lambda\psi''(x)
\end{equation}
is CM on $(0,\infty)$ if and only if $\lambda\le1$.
\par
In~\cite[Theorem~1]{egp}, it was proved that the function
\begin{equation}\label{z-s-t-(x)}
z_{s,t}(x)=\begin{cases}
\bigg[\dfrac{\Gamma(x+t)}{\Gamma(x+s)}\bigg]^{1/(t-s)}-x,&s\ne t\\
e^{\psi(x+s)}-x,&s=t
\end{cases}
\end{equation}
on $(-\alpha,\infty)$ for real numbers $s$ and $t$ and $\alpha=\min\{s,t\}$ is either convex and decreasing for $|t-s|<1$ or concave and increasing for $|t-s|>1$. In order to provide an alternative proof for \cite[Theorem~1]{egp}, the function
\begin{equation}\label{Delta-dfn}
\Delta_{s,t}(x)=\begin{cases}\bigg[\dfrac{\psi(x+t) -\psi(x+s)}{t-s}\bigg]^2
+\dfrac{\psi'(x+t)-\psi'(x+s)}{t-s},&s\ne t\\
[\psi'(x+s)]^2+\psi''(x+s),&s=t
\end{cases}
\end{equation}
for $|t-s|<1$ and $-\Delta_{s,t}(x)$ for $|t-s|>1$ are proved in \cite{notes-best-simple-open.tex, notes-best-simple.tex} to be CM on $(-\alpha,\infty)$.
\par
Using the complete monotonicity of the function~\eqref{Delta-dfn}, the inequality \eqref{batir-ineq-interesting} and \cite[Theorem~4.3]{alzer-grinshpan} mentioned on page~\pageref{intr-sec} were generalized in \cite[Theorem~5]{notes-best-simple-open.tex-rev} to a monotonic property as follows: For real numbers $s$ and $t$, $\alpha=\min\{s,t\}$ and $c\in(-\alpha,\infty)$, let
\begin{equation}
g_{s,t}(x)=\begin{cases}\displaystyle
\frac1{t-s} \int_c^x\ln\biggl[\frac{\Gamma(u+t)}{\Gamma(u+s)} \frac{\Gamma(c+s)}{\Gamma(c+t)}\biggr]\td u,&s\ne t\\[1em]
\displaystyle
\int_c^x[\psi(u+s)-\psi(c+s)]\td u,&s=t
\end{cases}
\end{equation}
on $x\in(-\alpha,\infty)$. Then the function
\begin{equation}
f_{s,t}(x)=\begin{cases}\displaystyle
\frac{g_{s,t}(x)}{[g'_{s,t}(x)-1]\exp[g'_{s,t}(x)]+1},&x\ne c\\[1em]
\dfrac1{g''_{s,t}(c)},&x=c
\end{cases}
\end{equation}
on $(-\alpha,\infty)$ is decreasing for $|s-t|<1$ and increasing for $|s-t|>1$.
\par
In \cite{notes-best-simple-open.tex-rev, notes-best-simple-rev.tex}, some other applications of the complete monotonicity of the function~\eqref{Delta-dfn} were also demonstrated.
\par
For real numbers $s$, $t$, $\alpha=\min\{s,t\}$ and $\lambda$, define
\begin{equation}\label{Delta-lambda-dfn}
\Delta_{s,t;\lambda}(x)=\begin{cases}\bigg[\dfrac{\psi(x+t) -\psi(x+s)}{t-s}\bigg]^2
+\lambda\dfrac{\psi'(x+t)-\psi'(x+s)}{t-s},&s\ne t\\
[\psi'(x+s)]^2+\lambda\psi''(x+s),&s=t
\end{cases}
\end{equation}
on $(-\alpha,\infty)$. It is clear that $\Delta_{s,t;\lambda}(x)=\Delta_{s,t}(x)$ and $\Delta_{s,t;\lambda}(x)=\Delta_{t,s;\lambda}(x)$.
\par
The aim of this paper is to present necessary and sufficient conditions for the function $\Delta_{s,t;\lambda}(x)$ to be CM on $(-\alpha,\infty)$.
\par
Our main results can be stated as the following Theorem~\ref{CMDT-divided-thm}.

\begin{thm}\label{CMDT-divided-thm}
The function $\Delta_{s,t;\lambda}(x)$ has the following CM properties:
\begin{enumerate}
\item
For $0<|t-s|<1$,
\begin{enumerate}
\item
the function $\Delta_{s,t;\lambda}(x)$ is CM on $(-\alpha,\infty)$ if and only if $\lambda\le1$,
\item
so is the function $-\Delta_{s,t;\lambda}(x)$ if and only if $\lambda\ge\frac1{|t-s|}$;
\end{enumerate}
\item
For $|t-s|>1$,
\begin{enumerate}
\item
the function $\Delta_{s,t;\lambda}(x)$ is CM on $(-\alpha,\infty)$ if and only if $\lambda\le\frac1{|t-s|}$,
\item
so is the function $-\Delta_{s,t;\lambda}(x)$ if and only if $\lambda\ge1$;
\end{enumerate}
\item
For $s=t$, the function $\Delta_{s,s;\lambda}(x)$ is CM on $(-s,\infty)$ if and only if $\lambda\le1$;
\item
For $|t-s|=1$,
\begin{enumerate}
\item
the function $\Delta_{s,t;\lambda}(x)$ is CM if and only if $\lambda<1$,
\item
so is the function $-\Delta_{s,t;\lambda}(x)$ if and only if $\lambda>1$,
\item
and $\Delta_{s,t;1}(x)\equiv0$.
\end{enumerate}
\end{enumerate}
\end{thm}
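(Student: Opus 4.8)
The plan is to rewrite $\Delta_{s,t;\lambda}$ as a Laplace transform, so that complete monotonicity becomes positivity of a density, and then to reduce that positivity to an elementary multiplicativity property of one explicit function. Using the symmetry $\Delta_{s,t;\lambda}=\Delta_{t,s;\lambda}$, I would assume $s\le t$, so that $\alpha=s$, and set $\delta=t-s$; the case $\delta>0$ is treated first. Starting from the classical representation $\psi(x+t)-\psi(x+s)=\int_0^\infty\frac{e^{-su}-e^{-tu}}{1-e^{-u}}\,e^{-xu}\,\td u$, valid for $x>-\alpha$, I would write the kernel as $e^{-su}g(u)$ with $g(u)=\frac{1-e^{-\delta u}}{1-e^{-u}}$, differentiate once in $x$, and square the identity by Fubini, which gives
\[
\Delta_{s,t;\lambda}(x)=\frac1{\delta^{2}}\int_0^\infty e^{-xu}e^{-su}[(g*g)(u)-\lambda\delta\,u\,g(u)]\,\td u,\qquad (g*g)(u)=\int_0^u g(v)g(u-v)\,\td v.
\]
After translating $x\mapsto x+c$ with $c>-\alpha$ — which only multiplies the density by the positive factor $e^{-cu}$ — the Bernstein--Widder theorem \eqref{berstein-1} shows that $\Delta_{s,t;\lambda}$ is CM on $(-\alpha,\infty)$ if and only if the bracket is nonnegative for all $u>0$, while $-\Delta_{s,t;\lambda}$ is CM if and only if it is nonpositive; equivalently, writing $R(u)=\frac{(g*g)(u)}{\delta\,u\,g(u)}$, if and only if $\lambda\le R(u)$ (resp.\ $\lambda\ge R(u)$) for every $u>0$. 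Thus the whole theorem reduces to computing $\inf_{u>0}R(u)$ and $\sup_{u>0}R(u)$.

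The crux, which I expect to be the main obstacle, is to establish that $g$ is super-multiplicative when $\delta<1$ and sub-multiplicative when $\delta>1$. For $v,w>0$ I would verify the identity $\frac{g(v)g(w)}{g(v+w)}=\frac{q(\delta)}{q(1)}$, where $q(a)=\frac{(1-e^{-av})(1-e^{-aw})}{1-e^{-a(v+w)}}$, and then compute that differentiating $\ln q(a)$ and $\ln\frac{q(a)}{a}$ in $a$ yields $\frac1a[\beta(av)+\beta(aw)-\beta(a(v+w))]$ and $\frac1a[\beta(av)+\beta(aw)-\beta(a(v+w))-1]$ respectively, where $\beta(x)=\frac{x}{e^{x}-1}$. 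Since $\beta$ is positive and strictly decreasing on $(0,\infty)$, one has $\beta(a(v+w))<\beta(av)<\beta(av)+\beta(aw)$, so $q$ is increasing; and since $\beta$ is strictly convex on $(0,\infty)$ — indeed $\beta(x)=\frac x2\coth\frac x2-\frac x2$ and $y\coth y$ is convex because $(y\coth y)''=\frac{2(y\cosh y-\sinh y)}{\sinh^{3}y}>0$ — a chord estimate for $\beta$ over $[0,a(v+w)]$ together with $\beta(0)=1$ gives $\beta(av)+\beta(aw)<\beta(a(v+w))+1$, so $q(a)/a$ is decreasing. Using $g(0^{+})=\delta$, these two facts yield, for $\delta<1$, the chain $\delta\,g(v+w)<g(v)g(w)<g(v+w)$, and the reversed chain for $\delta>1$; integrating $v\mapsto g(v)g(u-v)$ over $(0,u)$ then shows $R(u)\in(1,1/\delta)$ for $\delta<1$ and $R(u)\in(1/\delta,1)$ for $\delta>1$. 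Finally the boundary values $R(0^{+})=1$ and $R(\infty)=1/\delta$ follow from the small- and large-$u$ asymptotics of the convolution, so $\inf_{u>0}R(u)=\min\{1,1/\delta\}$ and $\sup_{u>0}R(u)=\max\{1,1/\delta\}$.

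With this in hand the cases fall out. For $0<|t-s|<1$ one gets that $\Delta_{s,t;\lambda}$ is CM $\iff\lambda\le1$ and that $-\Delta_{s,t;\lambda}$ is CM $\iff\lambda\ge\frac1{|t-s|}$, which is part~(1); for $|t-s|>1$ the two critical values are exchanged, which is part~(2). For $|t-s|=1$ one has $g\equiv1$, so the density collapses to $(1-\lambda)u\,e^{-su}$ and $\Delta_{s,t;\lambda}(x)=\frac{1-\lambda}{(x+\alpha)^{2}}$; this is CM precisely for $\lambda\le1$, vanishes identically exactly when $\lambda=1$, and has CM negative precisely for $\lambda\ge1$, which is part~(4). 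Finally $\Delta_{s,s;\lambda}(x)=\Delta_\lambda(x+s)$ with $\Delta_\lambda$ as in \eqref{di-tetra-gamma-lambda}, so part~(3) is immediate from the cited criterion \cite{AAM-Qi-09-PolyGamma.tex} (alternatively, (3) is the limit $t\to s^{+}$ of (1), since complete monotonicity is closed under pointwise limits, together with the necessity $\lambda\le1$ coming from $\Delta_{s,s;\lambda}(x)\sim\frac{1-\lambda}{x^{2}}$ as $x\to\infty$).

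The hard part will be the reduction itself — recognizing that after extracting $e^{-su}$ everything is governed by $g(u)=\frac{1-e^{-(t-s)u}}{1-e^{-u}}$, that the relevant property of $g$ is precisely super-/sub-multiplicativity, and that this rests on nothing deeper than the positivity, monotonicity, and convexity of $\frac{x}{e^{x}-1}$ — together with the bookkeeping needed to keep every inequality pointing the correct way across the regimes $|t-s|<1$, $|t-s|>1$, and the borderline $|t-s|=1$. The remaining ingredients (the Fubini/Laplace manipulations, the two asymptotic limits of $R$, and the degenerate case $s=t$) should be routine.
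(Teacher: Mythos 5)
Your proof is correct, and it takes a genuinely different route from the paper's. The paper never represents $\Delta_{s,t;\lambda}$ itself as a Laplace transform: it uses the recurrence \eqref{psisymp4} to show that $\Delta_{s,t;\lambda}(x)-\Delta_{s,t;\lambda}(x+1)=\frac{2\theta_{s,t;\lambda}(x)}{(x+s)(x+t)}$ for an auxiliary function $\theta_{s,t;\lambda}$ with the representation $\frac12\int_0^\infty\bigl[\frac{\tanh((t-s)u/2)}{(t-s)\tanh(u/2)}-\lambda\bigr]\bigl(e^{-su}+e^{-tu}\bigr)e^{-xu}\,\td u$; sufficiency then follows from the monotonicity of that kernel in $u$ (via the increasing property of $\frac{\sinh u}{u}$) together with a telescoping induction (since $\Delta_{s,t;\lambda}^{(k-1)}\to0$ at infinity, complete monotonicity of the unit difference propagates to $\Delta_{s,t;\lambda}$ itself), while necessity is handled separately from the sign condition $\Delta_{s,t;\lambda}\ge0$ by computing the limits of $-\frac{[\psi(x+t)-\psi(x+s)]^2}{(t-s)[\psi'(x+t)-\psi'(x+s)]}$ as $x\to\infty$ and as $x\to(-\alpha)^+$, using the mean value theorem and a cited lemma on $\frac{x\psi^{(k+1)}(x)}{\psi^{(k)}(x)}$. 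Your argument is more unified: the convolution theorem gives the Bernstein density of $\Delta_{s,t;\lambda}$ directly, so sufficiency and necessity in all regimes reduce to the single question of the range of $R(u)=\frac{(g*g)(u)}{\delta u g(u)}$, with the boundary limits $R(0^+)=1$ and $R(\infty)=\frac1\delta$ replacing the paper's asymptotic analysis near $x=\infty$ and $x=-\alpha$. Your key lemma --- sub-/super-multiplicativity of $g(u)=\frac{1-e^{-\delta u}}{1-e^{-u}}$, reduced via the identity $\frac{g(v)g(w)}{g(v+w)}=\frac{q(\delta)}{q(1)}$ to the monotonicity and convexity of $\frac{x}{e^x-1}$ --- plays the role of the paper's kernel monotonicity and is of comparable difficulty; both kernels sweep out exactly the open interval between $1$ and $\frac1{|t-s|}$, which is why the same thresholds appear. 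What the paper's route buys is the explicit CM classification of $\theta_{s,t;\lambda}$ (recorded as a standalone remark and reused in the proofs of Theorems~\ref{CMDT-divided-thm-2} and~\ref{CMDT-divided-thm-3}); what yours buys is the elimination of the telescoping induction and of the delicate limit computations at the left endpoint, at the modest cost of justifying the Fubini step and the uniqueness of the representing measure in Bernstein--Widder (both routine here, since the density is continuous and the integrals converge for $x>-\alpha$). Your treatment of $|t-s|=1$ and $s=t$ coincides in substance with the paper's, including the observation that $\lambda=1$ gives the identically zero function in the borderline case.
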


As a consequence of Theorem~\ref{CMDT-divided-thm}, the following logarithmically complete monotonicity of a function involving the ratio of two gamma functions is deduced.

\begin{thm}\label{CMDT-divided-thm-2}
For real numbers $s$, $t$, $\alpha=\min\{s,t\}$ and $\lambda$, let
\begin{equation}\label{Delta-gamma-dfn}
\mathcal{H}_{s,t;\lambda}(x)=
\begin{cases}
\dfrac{(x+t)^{[1/(t-s)-\lambda]/2}}{(x+s)^{[1/(t-s)+\lambda]/2}} \bigg[\dfrac{\Gamma(x+t)}{\Gamma(x+s)}\bigg]^{1/(t-s)},&s\ne t\\[0.6em]
\dfrac1{(x+s)^\lambda}\exp\biggl[\psi(x+s)+\dfrac1{2(x+s)}\biggr],&s=t
\end{cases}
\end{equation}
on $(-\alpha,\infty)$.
\begin{enumerate}
  \item
For $0<|t-s|<1$,
\begin{enumerate}
\item
the function $\mathcal{H}_{s,t;\lambda}(x)$ is LCM on $(-\alpha,\infty)$ if and only if $\lambda\ge\frac1{|t-s|}$,
\item
so is the function $[\mathcal{H}_{s,t;\lambda}(x)]^{-1}$ if and only if $\lambda\le1$;
\end{enumerate}
  \item
For $|t-s|>1$,
\begin{enumerate}
\item
the function $\mathcal{H}_{s,t;\lambda}(x)$ is LCM on $(-\alpha,\infty)$ if and only if $\lambda\ge1$,
\item
so is the function $[\mathcal{H}_{s,t;\lambda}(x)]^{-1}$ if and only if $\lambda\le\frac1{|t-s|}$;
\end{enumerate}
  \item
For $s=t$, the function $[\mathcal{H}_{s,s;\lambda}(x)]^{-1}$ is LCM on $(-\alpha,\infty)$ if and only if $\lambda\le1$;
  \item
For $|t-s|=1$,
\begin{enumerate}
\item
the function $\mathcal{H}_{s,t;\lambda}(x)$ is LCM on $(-\alpha,\infty)$ if and only if $\lambda>1$,
\item
so is the function $[\mathcal{H}_{s,s;\lambda}(x)]^{-1}$ if and only if $\lambda<1$,
\item
and $\mathcal{H}_{s,t;1}(x)\equiv1$.
\end{enumerate}
\end{enumerate}
\end{thm}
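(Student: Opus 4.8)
The plan is to deduce Theorem~\ref{CMDT-divided-thm-2} from Theorem~\ref{CMDT-divided-thm} by first translating logarithmic complete monotonicity into the complete monotonicity of a single function, $\pm\bigl[\ln\mathcal{H}_{s,t;\lambda}\bigr]''$, and then reading off that from an explicit Laplace representation. Since $\mathcal{H}_{s,t;\lambda}$ is positive on $(-\alpha,\infty)$, its logarithm is defined there; for $s\ne t$ one has
\[
\bigl[\ln\mathcal{H}_{s,t;\lambda}(x)\bigr]'=\frac{1/(t-s)-\lambda}{2(x+t)}-\frac{1/(t-s)+\lambda}{2(x+s)}+\frac{\psi(x+t)-\psi(x+s)}{t-s},
\]
and, for $s=t$, $\bigl[\ln\mathcal{H}_{s,s;\lambda}(x)\bigr]'=-\lambda/(x+s)+\psi'(x+s)-1/[2(x+s)^2]$; from $\psi(y)=\ln y-1/(2y)+O(y^{-2})$ and $\psi'(y)=1/y+O(y^{-2})$ one checks in each case that $\bigl[\ln\mathcal{H}_{s,t;\lambda}(x)\bigr]'\to0$ as $x\to\infty$. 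Because of this boundary behaviour, the definition of an LCM function collapses to a one-sided condition on the second logarithmic derivative: $\mathcal{H}_{s,t;\lambda}$ is LCM on $(-\alpha,\infty)$ if and only if $\bigl[\ln\mathcal{H}_{s,t;\lambda}\bigr]''$ is CM there, and $[\mathcal{H}_{s,t;\lambda}]^{-1}$ is LCM if and only if $-\bigl[\ln\mathcal{H}_{s,t;\lambda}\bigr]''$ is CM there. (For instance, if $\bigl[\ln\mathcal{H}_{s,t;\lambda}\bigr]''\ge0$ then $\bigl[\ln\mathcal{H}_{s,t;\lambda}\bigr]'$ is non-decreasing and hence $\le\bigl[\ln\mathcal{H}_{s,t;\lambda}\bigr]'(\infty)=0$, which is exactly the first-order inequality in the definition of LCM, while the higher-order ones are the CM inequalities for $\bigl[\ln\mathcal{H}_{s,t;\lambda}\bigr]''$.)

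Next I would differentiate once more and substitute the standard Laplace-type formulas $y^{-2}=\int_0^\infty re^{-yr}\,\td r$, $y^{-3}=\tfrac12\int_0^\infty r^2e^{-yr}\,\td r$, $\psi'(y)=\int_0^\infty re^{-yr}/(1-e^{-r})\,\td r$ and $\psi''(y)=-\int_0^\infty r^2e^{-yr}/(1-e^{-r})\,\td r$. Assuming $t>s$ (the case $t<s$ is identical, since $\mathcal{H}_{t,s;\lambda}=\mathcal{H}_{s,t;\lambda}$) and writing $\delta=t-s>0$, a routine simplification merges the three resulting integrands into one and gives
\[
\bigl[\ln\mathcal{H}_{s,t;\lambda}(x)\bigr]''=\int_0^\infty\frac r2\,e^{-sr}\bigl(1+e^{-\delta r}\bigr)\bigl[\lambda-G_\delta(r)\bigr]e^{-xr}\,\td r,\qquad G_\delta(r)=\frac{\tanh(\delta r/2)}{\delta\,\tanh(r/2)},
\]
while for $s=t$ it gives $\bigl[\ln\mathcal{H}_{s,s;\lambda}(x)\bigr]''=\int_0^\infty r\bigl[\lambda-G_0(r)\bigr]e^{-(x+s)r}\,\td r$ with $G_0(r)=\tfrac r2\coth\tfrac r2=\lim_{\delta\to0^+}G_\delta(r)$. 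Since the multipliers $\tfrac r2 e^{-sr}(1+e^{-\delta r})$ and $r$ are positive on $(0,\infty)$ and the kernels are continuous, the Bernstein--Widder theorem together with the uniqueness of the representing measure yields the sharp statements: $\bigl[\ln\mathcal{H}_{s,t;\lambda}\bigr]''$ is CM if and only if $\lambda\ge\sup_{r>0}G_\delta(r)$, and $-\bigl[\ln\mathcal{H}_{s,t;\lambda}\bigr]''$ is CM if and only if $\lambda\le\inf_{r>0}G_\delta(r)$ (and likewise with $G_0$ when $s=t$). Thus the whole theorem comes down to locating the extrema of $G_\delta$.

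The main obstacle is therefore the following monotonicity statement, which is exactly the kind of estimate that also drives the proof of Theorem~\ref{CMDT-divided-thm}: on $(0,\infty)$ the function $G_\delta$ is strictly increasing when $0<\delta<1$ and strictly decreasing when $\delta>1$, with $G_\delta(0^+)=1$ and $G_\delta(+\infty)=1/\delta$, whereas $G_1\equiv1$ and $G_0$ increases strictly from $G_0(0^+)=1$ to $+\infty$; this I would verify by differentiating $G_\delta$ and examining the sign of the resulting elementary expression (or via the product expansions of $\sinh$ and $\cosh$). Granting it, the four cases fall out. For $0<\delta<1$, $\inf_{r>0}G_\delta=1$ and $\sup_{r>0}G_\delta=1/\delta$, which gives parts (1a)--(1b); for $\delta>1$, $\inf_{r>0}G_\delta=1/\delta$ and $\sup_{r>0}G_\delta=1$, which gives (2a)--(2b); for $s=t$, $\inf_{r>0}G_0=1$ gives part~(3), while $\sup_{r>0}G_0=+\infty$ shows that $\mathcal{H}_{s,s;\lambda}$ is LCM for no value of $\lambda$, in agreement with the statement. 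Finally, for $\delta=1$ the kernel in the representation becomes $\tfrac r2(1+e^{-r})(\lambda-1)$, so $\bigl[\ln\mathcal{H}_{s,t;1}\bigr]''\equiv0$; combined with $\bigl[\ln\mathcal{H}_{s,t;1}\bigr]'(\infty)=0$ this forces $\ln\mathcal{H}_{s,t;1}$ to be constant, and evaluating the limit with $\Gamma(y+1)=y\Gamma(y)$ shows the constant is $0$, i.e. $\mathcal{H}_{s,t;1}\equiv1$; the two remaining assertions of part~(4) are then just the cases $\lambda>1$ and $\lambda<1$, where the kernel is a nonzero positive or negative multiple of $\tfrac r2(1+e^{-r})$.
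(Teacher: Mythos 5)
Your proposal is correct, and its sufficiency half is essentially the paper's: both arguments come down to the Laplace kernel $\frac{\tanh((t-s)u/2)}{(t-s)\tanh(u/2)}$ and to the fact that this ratio is increasing in $u$ for $0<|t-s|<1$ and decreasing for $|t-s|>1$, with limits $1$ at $0^+$ and $1/|t-s|$ at $\infty$ --- which is exactly the derivative computation you defer to, carried out in the paper's proof of Theorem~\ref{CMDT-divided-thm}. Two genuine differences are worth recording. First, the paper works with the first logarithmic derivative, observing $[\ln\mathcal{H}_{s,t;\lambda}(x)]'=\theta_{s,t;\lambda}(x)$ for the function $\theta_{s,t;\lambda}$ already constructed there, so that logarithmically complete monotonicity of $\mathcal{H}_{s,t;\lambda}$ (resp.\ of its reciprocal) is \emph{literally} the complete monotonicity of $-\theta_{s,t;\lambda}$ (resp.\ $\theta_{s,t;\lambda}$); you instead pass to the second logarithmic derivative and recover the first-order inequality from $[\ln\mathcal{H}_{s,t;\lambda}]'(\infty)=0$, an extra (correctly verified) boundary step that changes nothing essential. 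Second, and more substantively, your necessity argument differs: you invoke Bernstein--Widder together with uniqueness of the representing measure to conclude that a Laplace transform of a continuous density is completely monotonic only if the density is nonnegative, whence $\lambda\ge\sup_{r>0}G_\delta(r)$ (resp.\ $\lambda\le\inf_{r>0}G_\delta(r)$). The paper avoids uniqueness of the measure entirely: it uses only the zeroth-order inequality $\theta_{s,t;\lambda}(x)\le0$ (resp.\ $\ge0$), solves it for $\lambda$ as $\lambda\ge\Lambda_{s,t}(x)$, and evaluates the two limits $\lim_{x\to\infty}\Lambda_{s,t}(x)=1$ and $\lim_{x\to(-\alpha)^+}\Lambda_{s,t}(x)=1/(t-s)$ from the asymptotics of $\psi$ and $\psi'$ at $0^+$ and at $\infty$. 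Your route is more uniform --- it also subsumes the boundary cases $s=t$ and $|t-s|=1$ through $G_0(r)=\frac r2\coth\frac r2$ and $G_1\equiv1$, which the paper instead settles by separate explicit formulas such as $\mathcal{H}_{s,s+1;\lambda}(x)=[(x+s)(x+s+1)]^{(1-\lambda)/2}$ --- at the price of appealing to the (standard but heavier) uniqueness theorem for Laplace--Stieltjes transforms on a half-line. Both routes are sound.
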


As consequences of Theorem~\ref{CMDT-divided-thm-2}, some inequalities for the ratio of two gamma functions and divided differences of polygamma functions are derived as follows.

\begin{thm}\label{CMDT-divided-thm-3}
For positive numbers $a$ and $b$, the inequality
\begin{equation}\label{gamma-ratio-sqrt-ineq}
\bigg[\frac{\Gamma(b)}{\Gamma(a)}\bigg]^{1/(b-a)} < \sqrt{ab}\,\biggl(\frac{a}b\biggr)^{1/2(b-a)}
\end{equation}
holds for $0<|b-a|<1$ and reverses for $|b-a|>1$; For $0<|b-a|<1$, the double inequality
\begin{multline}\label{final-thm3-square-ab}
\frac{(k-1)!}2\biggl[\biggl(\frac1{b-a}+\beta\biggr)\frac1{a^k} +\biggl(\beta-\frac1{b-a}\biggr)\frac1{b^k}\biggr]\\* <\frac{(-1)^{k-1}\bigl[\psi^{(k-1)}(b)-\psi^{(k-1)}(a)\bigr]}{b-a}\\* <\frac{(k-1)!}2\biggl[\biggl(\frac1{b-a}+\gamma\biggr)\frac1{a^k} +\biggl(\gamma-\frac1{b-a}\biggr)\frac1{b^k}\biggr]
\end{multline}
on $(-\alpha,\infty)$ holds if and only if $\beta\le1$ and $\gamma\ge\frac1{|b-a|}$; For $|b-a|>1$, the inequalities in~\eqref{final-thm3-square-ab} are valid if and only if $\beta\le\frac1{|b-a|}$ and $\gamma\ge1$.
\end{thm}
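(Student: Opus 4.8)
All three assertions will be read off from the logarithmic complete monotonicity established in Theorem~\ref{CMDT-divided-thm-2}, specialised to $s=a$, $t=b$: this is legitimate because $a,b>0$ puts $0$ inside $(-\alpha,\infty)$, and since $\mathcal{H}_{a,b;\lambda}=\mathcal{H}_{b,a;\lambda}$ the apparent asymmetry between $a$ and $b$ in the statements is harmless.

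For \eqref{gamma-ratio-sqrt-ineq} the plan is to take $\lambda=1$. When $0<|b-a|<1$, Theorem~\ref{CMDT-divided-thm-2} says that $[\mathcal{H}_{a,b;1}]^{-1}$ is LCM, hence CM, hence non-increasing on $(-\alpha,\infty)$; on the other hand a routine Stirling estimate of $\ln\Gamma(x+b)-\ln\Gamma(x+a)$ shows $\mathcal{H}_{a,b;1}(x)\to1$ as $x\to\infty$. A non-constant, non-increasing, positive function that tends to $1$ stays strictly above $1$, so $\mathcal{H}_{a,b;1}(x)<1$ on $(-\alpha,\infty)$; putting $x=0$ and solving for the $\Gamma$-ratio yields exactly \eqref{gamma-ratio-sqrt-ineq}. (Non-constancy is clear, for a constant $\mathcal{H}_{a,b;1}\equiv1$ would itself be LCM, contradicting the ``only if'' part of Theorem~\ref{CMDT-divided-thm-2}, which forces $\lambda\ge1/|b-a|>1$ for $\mathcal{H}_{a,b;\lambda}$ to be LCM when $0<|b-a|<1$.) For $|b-a|>1$ the very same argument, now with $\mathcal{H}_{a,b;1}$ itself LCM and hence non-increasing to $1$, gives $\mathcal{H}_{a,b;1}(x)>1$ and the reversed inequality.

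For \eqref{final-thm3-square-ab} the plan is to differentiate $\ln\mathcal{H}_{a,b;\lambda}$ exactly $k$ times. Using $\frac{\td^{k}}{\td x^{k}}\ln(x+c)=(-1)^{k-1}(k-1)!/(x+c)^{k}$ and $\frac{\td^{k}}{\td x^{k}}\ln\Gamma(x+c)=\psi^{(k-1)}(x+c)$ and collecting terms, one obtains the identity
\begin{multline*}
(-1)^{k}[\ln\mathcal{H}_{a,b;\lambda}]^{(k)}(x)
=\frac{(k-1)!}{2}\left[\Big(\tfrac1{b-a}+\lambda\Big)\frac1{(x+a)^{k}}+\Big(\lambda-\tfrac1{b-a}\Big)\frac1{(x+b)^{k}}\right]\\
-\frac{(-1)^{k-1}\bigl[\psi^{(k-1)}(x+b)-\psi^{(k-1)}(x+a)\bigr]}{b-a}.
\end{multline*}
Hence, understanding \eqref{final-thm3-square-ab} as an assertion on $(-\alpha,\infty)$ --- i.e.\ with $a$ and $b$ read as $x+a$ and $x+b$ --- its right-hand inequality (with $\gamma$ in place of $\lambda$) says precisely that $(-1)^{k}[\ln\mathcal{H}_{a,b;\gamma}]^{(k)}(x)>0$ for every $k\in\mathbb{N}$, and its left-hand inequality (with $\beta$) that $(-1)^{k}[\ln\mathcal{H}_{a,b;\beta}]^{(k)}(x)<0$ for every $k\in\mathbb{N}$. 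The first is the (strict) logarithmic complete monotonicity of $\mathcal{H}_{a,b;\gamma}$ and the second that of $[\mathcal{H}_{a,b;\beta}]^{-1}$, so the ``if and only if'' of Theorem~\ref{CMDT-divided-thm-2} immediately delivers the thresholds: $\beta\le1$ and $\gamma\ge1/|b-a|$ when $0<|b-a|<1$, and $\beta\le1/|b-a|$ and $\gamma\ge1$ when $|b-a|>1$. The strictness of the inequalities is the standard fact that a non-constant CM function --- here $-(\ln\mathcal{H}_{a,b;\gamma})'$ and $(\ln\mathcal{H}_{a,b;\beta})'$ --- has strictly signed derivatives of all orders.

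The argument contains nothing conceptual beyond Theorem~\ref{CMDT-divided-thm-2}; the real work is bookkeeping. The likeliest source of trouble is the sign-and-coefficient chase in the displayed identity: one must land on exactly the combination in \eqref{final-thm3-square-ab}, complete with the overall $\tfrac12$, the polygamma difference weighted by $1/(b-a)$, and the coefficient $\tfrac1{b-a}+\lambda$ on $1/(x+a)^{k}$ versus $\lambda-\tfrac1{b-a}$ on $1/(x+b)^{k}$ --- any slip ruins the match. The second delicate point is the necessity (``only if'') direction: logarithmic complete monotonicity is a statement about all orders $k$ and all points of $(-\alpha,\infty)$ simultaneously, which is exactly why \eqref{final-thm3-square-ab} must be posited on the whole interval; to break the double inequality when $\lambda$ leaves its admissible range one has to extract, from the failure of LCM guaranteed by Theorem~\ref{CMDT-divided-thm-2}, a single order $k$ and a single point at which the corresponding derivative inequality, hence \eqref{final-thm3-square-ab}, fails.
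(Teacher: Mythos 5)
Your proposal is correct and follows essentially the same route as the paper: both parts are read off from Theorem~\ref{CMDT-divided-thm-2}, the first by combining the monotonicity of $\mathcal{H}_{s,t;1}$ with its limit $1$ at infinity (the paper uses Wendel's limit \eqref{wendel-approx} where you invoke Stirling, and substitutes $a=x+s$, $b=x+t$ where you set $x=0$), and the second by writing out $(-1)^k[\ln\mathcal{H}_{s,t;\lambda}]^{(k)}=(-1)^k[\theta_{s,t;\lambda}]^{(k-1)}$ explicitly and matching it with \eqref{final-thm3-square-ab}. Your treatment of strictness and non-constancy is, if anything, slightly more careful than the paper's, which appeals to the fact that a not-identically-zero completely monotonic function cannot vanish.
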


After proving the above theorems in next section, we would also like to give some remarks on the above theorems and related results to the inequality~\eqref{batir-one-side-ineq} and the increasing property of $\phi(x)$ defined by~\eqref{phi(x)-dfn} in the final section of this paper.

\section{Proofs of theorems}

Now we are in a position to prove the above theorems.

\begin{proof}[Proof of Theorem~\ref{CMDT-divided-thm}]
For $s=t$, it has been proved in \cite{AAM-Qi-09-PolyGamma.tex} that the function $\Delta_{s,s;\lambda}(x)$ is CM on $(-s,\infty)$ if and only if $\lambda\le1$.
\par
It is easy to calculate by integrating in part in \eqref{gamma-dfn} that
\begin{equation}\label{gamma-recurrence}
\Gamma(x+1)=x\Gamma(x), \quad x>0.
\end{equation}
Taking the logarithm of equation~\eqref{gamma-recurrence} and differentiating $k\in\mathbb{N}$ times consecutively on both sides give
\begin{equation}\label{psisymp4}
\psi^{(k-1)}(x+1)=\psi^{(k-1)}(x)+(-1)^{k-1}\frac{(k-1)!}{x^{k}},\quad x>0,\quad k\in\mathbb{N}.
\end{equation}
\par
For $s-t=\pm1$, using \eqref{psisymp4} gives
\begin{equation}
\Delta_{s,s\mp1;\lambda}(x)=
\begin{cases}
\dfrac{1-\lambda}{(x+s-1)^2};\\[0.8em]
\dfrac{1-\lambda}{(x+s)^2}.
\end{cases}
\end{equation}
As a result, the function $\Delta_{s,s\mp1;\lambda}(x)$ is CM if and only if $\lambda<1$, so is the function $-\Delta_{s,s\mp1;\lambda}(x)$ if and only if $\lambda>1$, and $\Delta_{s,s\mp1;1}(x)\equiv0$.
\par
For $0<|s-t|\ne1$, direct calculation and utilization of \eqref{psisymp4} yield
\begin{multline*}
\Delta_{s,t;\lambda}(x)-\Delta_{s,t;\lambda}(x+1)=\frac1{(t-s)^2}\bigl\{[\psi(x+t)-\psi(x+t+1)]\\
\begin{aligned}
&+[\psi(x+s+1)-\psi(x+s)]\bigr\}\bigl\{[\psi(x+t)+\psi(x+t+1)]\\
&-[\psi(x+s)+\psi(x+s+1)]\bigr\}+\frac{\lambda}{t-s}\bigl\{[\psi'(x+t)-\psi'(x+t+1)]\\ &-[\psi'(x+s)-\psi'(x+s+1)]\bigr\}
\end{aligned}\\
\begin{aligned}
&=\frac1{(t-s)^2}\biggl(\frac1{x+s}-\frac1{x+t}\biggr)\biggl\{2[\psi(x+t)-\psi(x+s)] +\biggl(\frac1{x+s}-\frac1{x+t}\biggr)\biggr\}\\
&\quad+\frac{\lambda}{t-s}\biggl[\frac1{(x+t)^2}-\frac1{(x+s)^2}\biggr]
\end{aligned}\\
\begin{aligned}
&=\frac2{(x+s)(x+t)}\biggl[\frac{\psi(x+t)-\psi(x+s)}{t-s}-\frac1{2(x+s)(x+t)} -\frac{\lambda(2x+s+t)}{2(x+s)(x+t)}\biggr]\\
&\triangleq\frac{2\theta_{s,t;\lambda}(x)}{(x+s)(x+t)}.
\end{aligned}
\end{multline*}\label{theta-lambda-dfn}
\par
From \eqref{gamma-dfn}, it is easy to deduce that
\begin{equation}\label{fracint}
\frac1{x^\omega}=\frac1{\Gamma(\omega)}\int_0^\infty t^{\omega-1}e^{-xt}\td t
\end{equation}
for real numbers $x>0$ and $\omega>0$. For $x>0$, it was listed in \cite[p.~259, 6.3.22]{abram} that
\begin{equation}\label{psi}
\psi(x)=-\gamma+\int_0^\infty\frac{e^{-t}-e^{-xt}}{1-e^{-t}}\td t.
\end{equation}
By virtue of formulas \eqref{fracint} and \eqref{psi}, it follows that
\begin{align*}
\theta_{s,t;\lambda}(x)&=\frac{\psi(x+t)-\psi(x+s)}{t-s} -\frac12\biggl[\biggl(\frac1{t-s}+\lambda\biggr)\frac1{x+s} +\biggl(\lambda-\frac1{t-s}\biggr)\frac1{x+t}\biggr]\\
&=\int_0^\infty\frac{e^{-su}-e^{-tu}}{(t-s)(1-e^{-u})}e^{-xu}\td u -\frac1{2(t-s)}\int_0^\infty\bigl(e^{-su}-e^{-tu}\bigr)e^{-xu}\td u \\ &\quad-\frac\alpha2\int_0^\infty\bigl(e^{-su}+e^{-tu}\bigr)e^{-xu}\td u\\
&=\int_0^\infty\biggl[\frac1{t-s}\biggl(\frac1{1-e^{-u}}-\frac12\biggr) \frac{e^{-su}-e^{-tu}}{e^{-su}+e^{-tu}}-\frac{\lambda}2\biggr] \bigl(e^{-su}+e^{-tu}\bigr)e^{-xu}\td u\\
&=\frac12\int_0^\infty\biggl[\frac{\tanh((t-s)u/2)}{(t-s)\tanh(u/2)} -\lambda\biggr]\bigl(e^{-su}+e^{-tu}\bigr)e^{-xu}\td u.
\end{align*}
It is not difficult to obtain that
\begin{equation}
\lim_{t\to\infty}\frac{\tanh((t-s)u/2)}{(t-s)\tanh(u/2)}=\frac1{|t-s|} \quad\text{and}\quad \lim_{t\to0^+}\frac{\tanh((t-s)u/2)}{(t-s)\tanh(u/2)}=1.
\end{equation}
Straightforward differentiation gives
\begin{equation*}
\frac{\td}{\td u}\biggl[\frac{\tanh((t-s)u/2)}{(t-s)\tanh(u/2)}\biggr] =\frac{u}{4\sinh^2(u/2)\cosh^2((t-s)u/2)}\biggl[\frac{\sinh u}u -\frac{\sinh((t-s)u)}{(t-s)u}\biggr].
\end{equation*}
Since the function $\frac{\sinh u}u$ is even and increasing on $(-\infty,\infty)\setminus\{0\}$, then
\begin{equation*}
\frac{\td}{\td u}\biggl[\frac{\tanh((t-s)u/2)}{(t-s)\tanh(u/2)}\biggr]
\begin{cases}
<0,&\text{if $|t-s|>1$;} \\
>0,& \text{if $0<|t-s|<1$.}
\end{cases}
\end{equation*}
Hence, the function $\frac{\tanh((t-s)u/2)}{(t-s)\tanh(u/2)}$ on $(0,\infty)$ is increasing for $0<|t-s|<1$ and decreasing for $|t-s|>1$. Therefore,
\begin{enumerate}
\item
the function $\theta_{s,t;\lambda}(x)$ is CM on $(-\alpha,\infty)$ if
\begin{enumerate}\label{i-ii-label}
\item[(i)]
either $0<|t-s|<1$ and $\lambda\le1$
\item[(ii)]
or $|t-s|>1$ and $\lambda\le\frac1{|t-s|}$;
\end{enumerate}
\item
the function $-\theta_{s,t;\lambda}(x)$ is CM on $(-\alpha,\infty)$ if
\begin{enumerate}\label{iii-vi-label}
\item[(iii)]
either $0<|t-s|<1$ and $\lambda\ge\frac1{|t-s|}$
\item[(iv)]
or $|t-s|>1$ and $\lambda\ge1$.
\end{enumerate}
\end{enumerate}
\par
Further, since the product of any finite CM functions is still CM and the function $\frac2{(x+s)(x+t)}$ is CM, then the function $\Delta_{s,t;\lambda}(x)-\Delta_{s,t;\lambda}(x+1)$ is CM on $(-\alpha,\infty)$ under the above condition (i) or (ii) and the function $\Delta_{s,t;\lambda}(x+1)-\Delta_{s,t;\lambda}(x)$ is CM on $(-\alpha,\infty)$ under the condition (iii) or (iv) above.
\par
For $x>0$ and $k\in\mathbb{N}$, it was listed in \cite[p.~260, 6.4.1]{abram} that
\begin{equation}
\psi ^{(k)}(x)=(-1)^{k+1}\int_{0}^{\infty}\frac{t^{k}}{1-e^{-t}}e^{-xt}\td t. \label{psim}
\end{equation}
This implies $\lim_{x\to\infty}\psi ^{(k)}(x)=0$, and so
\begin{align*}
\Delta_{s,t;\lambda}^{(k-1)}(x)&=\biggl\{\biggl[\frac1{t-s}\int_s^t\psi'(x+u)\td u\biggr]^2\biggr\}^{(k-1)}+\lambda\dfrac{\psi^{(k)}(x+t)-\psi^{(k)}(x+s)}{t-s}\\
&=\frac1{t-s}\Biggl[\sum_{i=0}^{k-1}\binom{k-1}{i}\int_s^t\psi^{(i+1)}(x+u)\td u \int_s^t\psi^{k-i}(x+u)\td u\Biggr]\\
&\quad+\lambda\dfrac{\psi^{(k)}(x+t)-\psi^{(k)}(x+s)}{t-s}\\
&\to0
\end{align*}
for $k\in\mathbb{N}$ as $x\to\infty$.
\par
If $\Delta_{s,t;\lambda}(x)-\Delta_{s,t;\lambda}(x+1)$ is CM, then
\begin{align*}
&\quad(-1)^{k-1}[\Delta_{s,t;\lambda}(x)-\Delta_{s,t;\lambda}(x+1)]^{(k-1)} \\* &=(-1)^{k-1}\Delta_{s,t;\lambda}^{(k-1)}(x) -(-1)^{k-1}\Delta_{s,t;\lambda}^{(k-1)}(x+1)\\
&\ge0
\end{align*}
for $k\in\mathbb{N}$ and $x\in(-\alpha,\infty)$. Thus, in virtue of the mathematical induction and the verified fact that $\lim_{x\to\infty}\Delta_{s,t;\lambda}^{(k-1)}(x)=0$ for $k\in\mathbb{N}$, it follows that
\begin{gather*}
(-1)^{k-1}\Delta_{s,t;\lambda}^{(k-1)}(x)\ge(-1)^{k-1}\Delta_{s,t;\lambda}^{(k-1)}(x+1) \ge(-1)^{k-1}\Delta_{s,t;\lambda}^{(k-1)}(x+2)\\
(-1)^{k-1}\Delta_{s,t;\lambda}^{(k-1)}(x+3)\ge\dotsm \ge (-1)^{k-1}\Delta_{s,t;\lambda}^{(k-1)}(x+m) \to0
\end{gather*}
as $m\to\infty$. This means that the function $\Delta_{s,t;\lambda}(x)$ is CM on $(-\alpha,\infty)$.
\par
Similarly, if $\Delta_{s,t;\lambda}(x+1)-\Delta_{s,t;\lambda}(x)$ is CM on $(-\alpha,\infty)$, then the function $-\Delta_{s,t;\lambda}(x)$ is also CM on $(-\alpha,\infty)$.
\par
\par
As a consequence of either \cite[Theorem~2]{chen-qi-log-jmaa}, \cite[Theorem~2.1]{Ismail-Lorch-Muldoon}, \cite[Thorem~1.3]{sandor-gamma-2-ITSF.tex} or~\cite[Thorem~3]{sandor-gamma-2-ITSF.tex-rgmia}, the double inequality
\begin{equation}\label{qi-psi-ineq}
\frac{(k-1)!}{x^k}+\frac{k!}{2x^{k+1}}<
(-1)^{k+1}\psi^{(k)}(x)<\frac{(k-1)!}{x^k}+\frac{k!}{x^{k+1}}
\end{equation}
for $k\in\mathbb{N}$ on $(0,\infty)$ can be derived easily. This implies that
\begin{equation}\label{lim-k-infty}
\lim_{x\to\infty}\bigl[(-1)^{k+1}x^k\psi^{(k)}(x)\bigr]=(k-1)!.
\end{equation}
If the function $\Delta_{s,t;\lambda}(x)$ is CM on $(-\alpha,\infty)$, then $\Delta_{s,t;\lambda}(x)\ge0$ on  $(-\alpha,\infty)$, which is equivalent to
\begin{equation}
\lambda\le-\frac{[\psi(x+t) -\psi(x+s)]^2}{(t-s)[\psi'(x+t)-\psi'(x+s)]} =-\frac{[\psi'(x+\xi)]^2}{\psi''(x+\xi)} \to1
\end{equation}
as $x\to\infty$ by making use of the mean value theorem for derivative and \eqref{lim-k-infty}, where $\xi$ is between $s$ and $t$. On the other hand, since $\lim_{x\to0^+}\bigl[(-1)^{k}\psi^{(k-1)}(x)\bigr]=\infty$ for $k\in\mathbb{N}$, L'H\^ospital's rule gives
\begin{gather*}
\lim_{x\to(-\alpha)^+}\frac{[\psi(x+t) -\psi(x+s)]^2}{(t-s)[\psi'(x+t)-\psi'(x+s)]} =\lim_{x\to(-s)^+}\frac{[\psi(x+t) -\psi(x+s)]^2}{(t-s)[\psi'(x+t)-\psi'(x+s)]}\\
\begin{aligned}
&=\frac2{t-s}\lim_{x\to(-s)^+}\biggl[\frac{\psi(x+t)\psi'(x+s)}{\psi''(x+s)} -\frac{\psi(x+s)\psi'(x+s)}{\psi''(x+s)}\biggr] \\ &=\frac2{t-s}\lim_{u\to0^+}\biggl[\frac{\psi(u+t-s)\psi'(u)}{\psi''(u)} -\frac{\psi(u)\psi'(u)}{\psi''(u)}\biggr],
\end{aligned}
\end{gather*}
where $t>s$ and $t-s\ne1$ are assumed without loss of generality. From
\begin{equation}\label{u-psi-2-0}
\lim_{u\to0^+}[u\psi(u)]=\lim_{u\to0^+}[u\psi(u+1)-1]=-1
\end{equation}
by \eqref{psisymp4} for $k=1$ and the fact obtained in \cite[p.~182, Lemma~2.1]{forum-alzer} that the function $\frac{x\psi^{(k+1)(x)}}{\psi^{(k)}(x)}$ for $k\in\mathbb{N}$ is strictly increasing from $[0,\infty)$ onto $[-(k+1),-k)$, it is deduced readily that
\begin{equation}
\lim_{u\to0^+}\frac{\psi'(u)}{\psi''(u)}=\lim_{u\to0^+}u\lim_{u\to0^+}\frac{\psi'(u)}{u\psi''(u)}=0
\end{equation}
 and
\begin{equation}
\lim_{u\to0^+}\frac{\psi(u)\psi'(u)}{\psi''(u)}=\lim_{u\to0^+}[u\psi(u)] \lim_{u\to0^+}\frac{\psi'(u)}{u\psi''(u)} =\frac12.
\end{equation}
So
\begin{equation}
\lim_{x\to(-\alpha)^+}\frac{[\psi(x+t) -\psi(x+s)]^2}{(t-s)[\psi'(x+t)-\psi'(x+s)]}=-\frac1{|t-s|}.
\end{equation}
As a result, the necessities for the function $\Delta_{s,t;\lambda}(x)$ to be CM on $(-\alpha,\infty)$ is proved. The left proofs are similar and so omitted. The proof of Theorem~\ref{CMDT-divided-thm} is complete.
\end{proof}

\begin{proof}[Proof of Theorem~\ref{CMDT-divided-thm-2}]
For the case $t-s=1$, it is easy to see that
\begin{equation}
\mathcal{H}_{s,s+1;\lambda}(x)=[(x+s)(x+s+1)]^{(1-\lambda)/2}.
\end{equation}
Therefore, the logarithmically complete monotonicity of the function $\mathcal{H}_{s,s+1;\lambda}(x)=\mathcal{H}_{s+1,s;\lambda}(x)$ is proved.
\par
For $s=t$, it is equivalent to showing the logarithmically complete monotonicity of the function
\begin{equation}
\frac1{x^\lambda}\exp\biggl[\psi(x)+\frac1{2x}\biggr]
\end{equation}
on $(0,\infty)$, which is a direct consequence of the complete monotonicity of the function
\begin{equation}\label{psi-1-der-frac}
\psi'(x)-\frac1{2x^2}-\frac\lambda{x}
\end{equation}
on $(0,\infty)$, whose sufficiency has been verified in the proof of In order to show the necessity, it is sufficient to deduce $\lambda\le1$ from the positivity of the function $\lambda\le1$ from the positivity of the function \eqref{psi-1-der-frac}, which is equivalent to
\begin{equation}
\lambda\le x\biggl[\psi'(x)-\frac1{2x^2}\biggr]=x\psi'(x)-\frac1{2x}\to1
\end{equation}
as $x\to\infty$ by making use of \eqref{lim-k-infty}.
\par
For $0<|t-s|\ne1$, taking the logarithm of $\mathcal{H}_{s,t;\lambda}(x)$ and differentiating yields $[\ln\mathcal{H}_{s,t;\lambda}(x)]'=\theta_{s,t;\lambda}(x)$, where $\theta_{s,t;\lambda}(x)$ is the function defined in the proof of Theorem~\ref{CMDT-divided-thm} on page~\pageref{theta-lambda-dfn}. Hence, the function $\mathcal{H}_{s,t;\lambda}(x)$ is LCM on $(-\alpha,\infty)$ when the condition (iii) or (vi) on page~\pageref{iii-vi-label} is satisfied and so is the function $[\mathcal{H}_{s,t;\lambda}(x)]^{-1}$ when the condition (i) or (ii) on page~\pageref{i-ii-label} in the proof of Theorem~\ref{CMDT-divided-thm} is satisfied.
\par
If the function $\mathcal{H}_{s,t;\lambda}(x)$ is LCM on $(-\alpha,\infty)$, then $\theta_{s,t;\lambda}(x)\le0$ on $(-\alpha,\infty)$, which can be rewritten as
\begin{align*}
\lambda&\ge\frac{2(x+s)(x+t)}{2x+s+t}\biggl[\frac{\psi(x+t)-\psi(x+s)}{t-s}-\frac1{2(x+s)(x+t)}\biggr] \triangleq\Lambda_{s,t}(x).
\end{align*}
It is easy to see that
\begin{align*}
\lim_{x\to\infty}\Lambda_{s,t}(x)&=\lim_{x\to\infty} \frac{2(x+s)(x+t)[\psi(x+t)-\psi(x+s)]} {(t-s)(2x+s+t)}-\lim_{x\to\infty}\frac1{2x+s+t}\\
&=\lim_{x\to\infty} \frac{2(x+s)(x+t)}{(t-s)(2x+s+t)} \lim_{x\to\infty}\{x[\psi(x+t)-\psi(x+s)]\}\\
&=\frac1{t-s}\lim_{x\to\infty}\biggl[x\int_s^t\psi'(x+u)\td u\biggr]\\
&=\frac1{t-s}\int_s^t\lim_{x\to\infty}[x\psi'(x+u)]\td u\\
&=1
\end{align*}
by~\eqref{lim-k-infty} for $k=1$. On the other hand, if assume $t>s$ with out loss of generality, then
\begin{align*}
\lim_{x\to(-\alpha)^+}\Lambda_{s,t}(x)&=\lim_{x\to(-s)^+}\Lambda_{s,t}(x)\\
&=\lim_{u\to0^+}\frac{2u(u+t-s)}{2u+t-s}\biggl[\frac{\psi(u+t-s)-\psi(u)}{t-s} -\frac1{2u(u+t-s)}\biggr]\\
&=\lim_{u\to0^+}\frac{2u[\psi(u+t-s)-\psi(u)]}{t-s}-\frac1{t-s}\\
&=-\lim_{u\to0^+}\frac{2u\psi(u)}{t-s}-\frac1{t-s}\\
&=\frac1{t-s}
\end{align*}
by virtue of~\eqref{u-psi-2-0}. Consequently, the necessities for the function $\mathcal{H}_{s,t;\lambda}(x)$ to be LCM on $(-\alpha,\infty)$ are verified.
\par
The left proofs are similar and so omitted. Theorem~\ref{CMDT-divided-thm-2} is proved.
\end{proof}

\begin{proof}[Proof of Theorem~\ref{CMDT-divided-thm-3}]
In \cite{wendel}, the following asymptotic relation was obtained:
\begin{equation}\label{wendel-approx}
\lim_{x\to\infty}\frac{\Gamma(x+s)}{x^s\Gamma(x)}=1
\end{equation}
for real $s$ and $x$ holds. This implies that
\begin{equation}\label{infinity-limit-ratio-gamma}
\begin{split}
\mathcal{H}_{s,t;\lambda}(x) &=\frac{x+s}{[(x+s)(x+t)]^{\lambda/2}} \biggl[\dfrac{\Gamma(x+t)}{(x+s)^{t-s}\Gamma(x+s)}\biggr]^{1/(t-s)} \frac{(x+t)^{1/2(t-s)}}{(x+s)^{1/2(t-s)}}\\
&\to\begin{cases}
1,&\lambda=1\\
0,&\lambda>1\\
\infty,&\lambda<1
\end{cases}
\end{split}
\end{equation}
as $x\to\infty$ for $s\ne t$. As a result, from the fact that the function $\mathcal{H}_{s,t;1}(x)$ on $(-\alpha,\infty)$ is increasing for $0<|t-s|<1$ and decreasing for $|t-s|>1$, it is deduced that the inequality
\begin{equation*}
\bigg[\frac{\Gamma(x+t)}{\Gamma(x+s)}\bigg]^{1/(t-s)} <\frac{(x+s)^{[1/(t-s)+1]/2}}{(x+t)^{[1/(t-s)-1]/2}}
\end{equation*}
on $(-\alpha,\infty)$ holds for $0<|t-s|<1$ and reverses for $|t-s|>1$. Letting $x+s=a$ and $x+t=b$ in the above inequality gives
\begin{equation*}
\bigg[\frac{\Gamma(b)}{\Gamma(a)}\bigg]^{1/(b-a)} <\frac{a^{[1/(b-a)+1]/2}}{b^{[1/(b-a)-1]/2}}
\end{equation*}
which is equivalent to \eqref{gamma-ratio-sqrt-ineq}.
\par
By definition of LCM function and the fact \cite[p.~82]{e-gam-rat-comp-mon} that a completely monotonic function which is non-identically zero cannot vanish at any point on $(0,\infty)$, it is easy to see that when $0<|t-s|<1$, the inequality
\begin{equation}\label{H-theta-ineq}
(-1)^k[\ln\mathcal{H}_{s,t;\lambda}(x)]^{(k)}=(-1)^k[\theta_{s,t;\lambda}(x)]^{(k-1)}>0
\end{equation}
for $k\in\mathbb{N}$ holds if and only if $\lambda\ge\frac1{|t-s|}$ and reverses if and only if $\lambda\ge1$. The inequality~\eqref{H-theta-ineq} may be rewritten as
\begin{multline}
\frac{(-1)^{k-1}\bigl[\psi^{(k-1)}(x+t)-\psi^{(k-1)}(x+s)\bigr]}{t-s}\\* <\frac{(k-1)!}2\biggl[\biggl(\frac1{t-s}+\lambda\biggr)\frac1{(x+s)^k} +\biggl(\lambda-\frac1{t-s}\biggr)\frac1{(x+t)^k}\biggr].
\end{multline}
Consequently, utilizing the complete monotonicity of $\theta_{s,t;\lambda}(x)$ or the logarithmically complete monotonicity of $\mathcal{H}_{s,t;\lambda}(x)$ concludes the double inequality
\begin{multline}\label{final-thm3-square}
\frac{(k-1)!}2\biggl[\biggl(\frac1{t-s}+\beta\biggr)\frac1{(x+s)^k} +\biggl(\beta-\frac1{t-s}\biggr)\frac1{(x+t)^k}\biggr]\\ <\frac{(-1)^{k-1}\bigl[\psi^{(k-1)}(x+t)-\psi^{(k-1)}(x+s)\bigr]}{t-s}\\ <\frac{(k-1)!}2\biggl[\biggl(\frac1{t-s}+\gamma\biggr)\frac1{(x+s)^k} +\biggl(\gamma-\frac1{t-s}\biggr)\frac1{(x+t)^k}\biggr]
\end{multline}
on $(-\alpha,\infty)$ holds either for $0<|t-s|<1$ if and only if $\gamma\ge\frac1{|t-s|}$ and $\beta\le1$ or for $|t-s|>1$ if and only if $\beta\le\frac1{|t-s|}$ and $\gamma\ge1$. Replacing $x+s$ and $x+t$ by $a$ and $b$ respectively in~\eqref{final-thm3-square} leads to~\eqref{final-thm3-square-ab}. The proof of Theorem~\ref{CMDT-divided-thm-3} is complete.
\end{proof}

\section{Remarks}

\begin{rem}
Taking $\lambda=s-t>0$ in Theorem~\ref{CMDT-divided-thm} produces that the function $\frac{\Gamma(x+s)}{\Gamma(x+t)}$ on $(-t,\infty)$ is increasingly convex for $s-t>1$ and increasingly concave for $0<s-t<1$. For detailed information, please refer to \cite[Remark~4.2 and Remark~4.5]{Wendel2Elezovic.tex} on the paper \cite{Kazarinoff-56}.
\end{rem}

\begin{rem}
From the proofs of Theorem~\ref{CMDT-divided-thm} and Theorem~\ref{CMDT-divided-thm-2}, the following conclusions may be summarized: For real numbers $s$, $t$, $\alpha=\min\{s,t\}$ and $\lambda$, the function
\begin{equation}
\theta_{s,t;\lambda}(x)=
\begin{cases}
\dfrac{\psi(x+t)-\psi(x+s)}{t-s} -\dfrac{1+\lambda(2x+s+t)}{2(x+s)(x+t)},&s\ne t\\[0.8em]
\psi'(x+s)-\dfrac{1+2\lambda(x+s)}{2(x+s)^2},&s=t
\end{cases}
\end{equation}
on $(-\alpha,\infty)$ has the following completely monotonic properties:
\begin{enumerate}
\item
For $0<|t-s|<1$,
\begin{enumerate}
\item
the function $\theta_{s,t;\lambda}(x)$ is CM if and only if $\lambda\le1$,
\item
the function $-\theta_{s,t;\lambda}(x)$ is CM if and only if $\lambda\ge\frac1{|t-s|}$;
\end{enumerate}
\item
For $|t-s|>1$,
\begin{enumerate}
\item
the function $\theta_{s,t;\lambda}(x)$ is CM if and only if $\lambda\le\frac1{|t-s|}$,
\item
the function $-\theta_{s,t;\lambda}(x)$ is CM if and only if $\lambda\ge1$;
\end{enumerate}
\item
For $s=t$, the function $\theta_{s,s;\lambda}(x)$ is CM if and only if $\lambda\le1$;
\item
For $|t-s|=1$,
\begin{enumerate}
\item
the function $\theta_{s,t;\lambda}(x)$ is CM if and only if $\lambda<1$,
\item
so is the function $-\theta_{s,t;\lambda}(x)$ if and only if $\lambda>1$,
\item
and $\theta_{s,t;1}(x)\equiv0$.
\end{enumerate}
\end{enumerate}
\end{rem}

\begin{rem}
In \eqref{gamma-ratio-sqrt-ineq}, taking $b=x+1$ and $a=x+\frac12$ yields
\begin{equation}
\biggl[\frac{\Gamma(x+1)}{\Gamma(x+1/2)}\biggr]^2<\biggl(x+\frac12\biggr)\sqrt{\frac{x+1/2}{x+1}}\,
\end{equation}
for $x>-\frac12$, which is a refinement of the inequality
\begin{equation}
\biggl[\frac{\Gamma(x+1)}{\Gamma(x+1/2)}\biggr]^2-x<\frac12
\end{equation}
for $x>-\frac12$, obtained in \cite{waston}.
\end{rem}

\begin{rem}
In~\cite[Lemma~1.2]{batir-new-jipam} and~\cite[Lemma~1.2]{batir-new-rgmia}, it was discovered that if $a\le-\ln2$ and $b\ge0$, then
\begin{equation}\label{batir-ineq-orig}
a-\ln\bigl(e^{1/x}-1\bigr)<\psi(x)<b-\ln\bigl(e^{1/x}-1\bigr)
\end{equation}
holds for $x>0$.
\par
In \cite[Theorem~2.8]{batir-jmaa-06-05-065}, the inequality \eqref{batir-ineq-orig} was sharpened as $a\le-\gamma$ and $b\ge0$.
\par
In \cite{alzer-expo-math-2006}, the function $\phi(x)$ defined by \eqref{phi(x)-dfn} was proved to be strictly increasing on $(0,\infty)$ and
\begin{equation}\label{limit=0}
\lim_{x\to\infty}\phi(x)=0.
\end{equation}
\par
In \cite{property-psi.tex}, among other things, the function $\phi(x)$ was proved to be both strictly increasing and concave on $(0,\infty)$, with $\lim_{x\to0^+}\phi(x)=-\gamma$ and the limit \eqref{limit=0}.
\par
It is not difficult to see that all these results extend, refine and generalize the one-side inequality~\eqref{batir-one-side-ineq} or the increasing property of $\phi(x)$.
\end{rem}

\begin{rem}
After the monotonic and convex properties of the function \eqref{z-s-t-(x)} were perfectly procured in \cite[Theorem~1]{egp}, several alternative proofs were supplied in \cite{201-05-JIPAM, notes-best-simple-open.tex, notes-best-simple.tex, notes-best-new-proof.tex, notes-best.tex-mia, notes-best.tex-rgmia}. The investigation of the function \eqref{z-s-t-(x)} has a long history, see \cite{Kazarinoff-56, Lazarevic, waston} or the survey articles \cite{bounds-two-gammas.tex, Wendel2Elezovic.tex} and related references therein.
\end{rem}

\subsection*{Acknowledgements}
This manuscript was completed during the first author's visit to the RGMIA, Victoria University, Australia, between March 2008 and February 2009. The first author would like to express many thanks to Professors Pietro Cerone and Server S.~Dragomir and other local colleagues for their invitation and hospitality throughout this period.

\end{document}